\newtheorem{theorem}{Theorem} %[section]
\newtheorem{lemma}[theorem]{Lemma}
\newtheorem{corollary}[theorem]{Corollary}
\theoremstyle{definition}
\theoremstyle{definition}\newtheorem{example}[theorem]{Example}
\theoremstyle{definition}
\newcommand{\bB}{\mathbb{B}}
\newcommand{\bL}{\mathbb{L}}
\newcommand{\bN}{\mathbb{N}}
\newcommand{\bR}{\mathbb{R}}
\newcommand{\bG}{\mathbb{G}}
\newcommand{\bS}{\mathbb{S}}
\newcommand{\bV}{\mathbb{\hspace{-.07mm}V\hspace{-.2mm}}}
\newcommand{\cB}{\mathcal{B}}
\newcommand{\cL}{\mathcal{L}}
\newcommand{\fM}{\mathfrak{M}}
\newcommand{\unif}{\text{\rm unif}}
\newcommand{\var}{\text{\rm var}}
\newcommand{\cov}{\text{\rm cov}}
\newcommand{\DST}{\text{\rm DST}}
\newcommand{\BST}{\text{\rm BST}}
\newcommand{\todistr}{\to_{\text{\rm\tiny distr}}} 
\newcommand{\tow}{\to_{\text{\rm\tiny w}}}
\newcommand{\brend}{\hfill $\triangleleft$} %\\[-2mm]}
\newcommand{\oP}{\prec}
\newcommand{\leP}{\preceq}
\title[Limits of sequences of binary trees]
      {A note on limits of sequences of binary trees}
\author{Rudolf Gr{\"u}bel}
\affiliation{
       Leibniz Universit{\"a}t Hannover, Hannover, Germany}
\keywords{Asymptotics, binary trees, binary search trees,
          digital search trees, Gaussian process, subtree size convergence.}
\begin{document}

\publicationdata{vol. 25:1}{2023}{17}{10.46298/dmtcs.10968}{2023-02-16; 2023-02-16; 2023-04-04}{2023-05-11}

\maketitle

\begin{abstract}
We discuss a notion of convergence for binary trees that is based on
subtree sizes. In analogy to recent developments in the theory of graphs, posets and
permutations we investigate some general 
aspects of the topology, such as a characterization of the set of possible 
limits and its structure as a metric space. For random trees the subtree 
size topology arises in the context of algorithms for searching and sorting 
when applied to random input, resulting in a sequence of nested trees. For these
we obtain a structural result based on a local version of exchangeability. 
This in turn leads to a central limit theorem, with possibly mixed asymptotic 
normality.
\end{abstract}

\section{Introduction}\label{sec:intro}
A description of large discrete objects can be based on a suitable convergence
concept, together with a characterization of the possible limits. For
graphs~\cite{LovSze} used subgraph counts and obtained
a description of the limits as \emph{graphons}; see also~\cite{Lovasz} and the references
given there. A similar approach has been used in~\cite{JansonPoset} for posets, 
in~\cite{ElekTardos} for trees
and in~\cite{Hopp} for permutations; in the latter case 
pattern counting leads to \emph{permutons} 
as limit objects. (Some details are given below at the end of Section~\ref{sec:sts}.)
In the present note we use subtree sizes in a similar fashion to obtain a convergence 
concept for binary trees, and we obtain a description of the limit objects as probability
distributions on the set of infinite sequences of zeros and ones. 

In~\cite{LovSze,Hopp,ElekTardos} randomness appears somewhat 
implicitly in the relation to subsampling. It is further used in~\cite{LovSze} and
\cite{Hopp}, via a suitable 
probabilistic construction, to show that each of the potential limit objects indeed 
occurs for some sequence of graphs or permutations. Sequences of binary trees arise in connection
with algorithms for searching and sorting: With random input both the binary search tree (BST) 
and the digital search tree (DST) algorithms~\cite[Chapter 6]{Knuth} lead to increasing
random sequences 
$( X_n)_{n\in\bN}$ of binary trees, where $X_n$ has $n$ nodes 
(again, details are given below).
The asymptotics of such sequences have been studied in~\cite{EGW1} where the subtree size
topology appears in the context of Markov chain boundary theory. A similar boundary theory
interpretation for the topologies of substructure sampling has been found for graph sequences 
in~\cite{GrueDMTCS} and for sequences of permutations in~\cite{Grue22}.

A different class of binary trees appears in connection with 
R\'emy's algorithm, which provides a sequence $(X_n)_{n\in\bN}$ of 
trees where each $X_n$ is uniformly distributed on the set of trees with $n$ nodes. 
This is again a combinatorial Markov chain in the sense of~\cite{GrSemBerKMK}, 
and its
Martin boundary has been determined in~\cite{EGW2}.
In contrast to~\cite{EGW1}, where the boundary was worked out directly through the
Martin kernel, the approach in~\cite{EGW2} is based on the construction of exchangeable 
arrays and an associated representation theorem, as discussed in depth 
in~\cite{KallSym}. This also leads to
a description of  $(X_n)_{n\in\bN}$ as the result of sampling from a real tree in
a specific manner, similar to the use of graphons and permutons. 
For graphons an exchangeability approach is outlined 
in~\cite[Section 11.3.3]{Lovasz} and studied in more detail in~\cite{DiaconisJanson}; 
for a similar treatment of randomly growing permutations see~\cite{Grue22}.
Recently, \cite{ElekTardos} constructed \emph{dendrons} as limit objects for general trees,
with an approach based on regarding trees as metric spaces, together with a suitable 
rescaling, and using the machinery of ultraproducts and ultralimits. Finally, rooted general
trees and their limits appear in connection with classical branching processes; 
see the survey~\cite{JansonSurvey} and the references given there.

It is well known that search trees and uniform  trees belong to two different
`universality classes', often labeled by the asymptotics 
of their height, which is $\log n$ in the first and $\sqrt{n}\,$ in the second case.
Another aim of this note is to show that an approach based on probabilistic symmetries
can also be used in the context of trees of logarithmic height.

In Section~\ref{sec:sts} we first introduce some basic notation for binary trees and
then study the subtree size topology, proceeding essentially 
as in~\cite{Hopp} for permutation sequences. In Section~\ref{sec:symm} we consider 
tree sequences that grow by one node at a time, such as the output sequences obtained
with the BST and DST algorithms mentioned above, where we  introduce a local notion of
exchangeability. This is then applied in Section~\ref{sec:FCLT} 
to obtain a second order result for subtree size convergence, where a possibly mixed Gaussian process arises as the distributional limit.

We restrict ourselves to  binary trees in order to arrive at a compact presentation. 
Many related varieties of trees, such as quad trees, may be treated in a similar manner;
see also the models considered in~\cite{DevSplit} and in~\cite{EGW1}. 

\section{Subtree size convergence}\label{sec:sts}

Let $\bV:=\{0,1\}^\star:= \bigsqcup_{k=0}^\infty \{0,1\}^k$ be the set of finite words 
with letters from 
the alphabet $\{0,1\}$. We write 
$|u|=k$ for the \emph{length} of the word $u=(u_1,\ldots,u_k)$ and $\bV_k$ for the set of words of
length $k$. We will also use the notation $|A|$ for the size of a set $A$. The \emph{concatenation} of $u=(u_1,\ldots,u_k)$ and $v=(v_1,\ldots,v_l)$ is
given by $u+v=(u_1,\ldots,u_k,v_1,\ldots,v_l)$. On $\bV$, the
\emph{prefix order} is defined by $v=(v_1,\dots,v_k)\oP w=(w_1,\ldots,w_l)$ if
$k<l$ and $v_i=w_i$ for all $i\in[k]:=\{1,\ldots,k\}$. As usual, we augment this by 
putting $v\leP w$ if $v\oP w$ or $v=w$. 

By a \emph{binary tree} $x$ we mean a subset of 
$\bV\,$  (the potential \emph{nodes} or \emph{vertices} of the tree) 
with the property that $v=(v_1,\ldots v_k)\in x$ with $k>0$ implies
$(v_1,\ldots,v_{k-1})\in x$. In short, binary trees are sets of words that are 
\emph{prefix stable}. 
The node $v=\emptyset$ (arising if $k=0$) is the \emph{root} of the tree.
Further, $v1:=(v_1,\ldots,v_k,1)$ and $v0:=(v_1,\ldots,v_k,0)$ are the
\emph{right} and \emph{left descendant} of $v$ respectively. The set $\bV$ of all nodes
may be seen as the complete infinite binary tree, $\bB_n$ is the set of binary trees
with $n$ nodes, and $\bB:=\bigsqcup_{n=0}^\infty \bB_n$ is the set of all binary trees
with finitely many nodes. The (external) \emph{boundary} $\partial x$ of a finite tree
consists of all \emph{external nodes} $v\in \bV\setminus x$ with $v=u0$ or $v=u1$ 
for some $u\in x$.
It is easy to see that $|\partial x|=|x|+1$ for all $x\in\bB$.
 
Two tree-related notions that are particularly important for us are the \emph{subtree}
$\sigma(x,u)$ of a tree $x$ rooted at $u\in x$ and the (relative) \emph{subtree size function}
$t(x,\cdot):\bV \to [0,1]$ of $x$, with
\begin{equation}\label{eq:defsts}
     \sigma(x,u) := \{v\in \bV: \, u+v\in x\},\quad
     t(x,u) := \frac{1}{|x|} \,|\sigma(x,u)|\quad\text{for all } u\in \bV.
\end{equation}
We say that a sequence $(x_n)_{n\in\bN}$ \emph{converges in the subtree size topology}\,
if, for all $u\in\bV$, the real numbers $t(x_n,u)$ converge as $n\to\infty$. It is easy to
see that finite binary trees are characterized by their subtree size function. We may therefore regard
the mapping $\bB\ni x\mapsto (u\mapsto t(x,u))\in [0,1]^\bV$ as an embedding of the set of
finite binary trees into a set that is compact by Tychonoff's theorem under the topology of pointwise convergence, as in the definition of subtree size convergence, and may even 
identify trees with their subtree size functions. In this sense the closure $\overline\bB$ 
of the image of the embedding provides a compactification where the limits 
are given by the functions on $\bV$ that appear as pointwise limits of the sequences $(t(x_n,\cdot))_{n\in\bN}$ for convergent sequences of trees.
Obviously, not all functions on $\bV$ can arise in this way, and the identification of subtree size limits amounts to finding a tractable space that is homeomorphic to the 
boundary $\overline \bB\setminus \bB$. Note, however,  that the general 
abstract setting immediately yields that each sequence of trees has a convergent 
subsequence.

Let  $\bV_\infty:=\{0,1\}^\infty$ be the set of infinite sequences of zeros and ones and let
$\cB_\infty$ be the $\sigma$-field on the sequence space that is generated by the 
coordinate projections $(v_n)_{n\in\bN}\mapsto v_k$,
$k\in\bN$. Let $\fM_\infty:=\fM(\bV_\infty,\cB_\infty)$ be the set of 
probability measures on $(\bV_\infty,\cB_\infty)$. 
We use the canonical extension of the prefix order to pairs $(u,v)$ with 
$u\in \bV$ and $v\in\bV_\infty$. 
For all $u\in \bV$ let $B_u:=\{v\in \bV_\infty: \, u\oP v \}$. Then 
\begin{equation}\label{eq:defB}
 B_u\cap B_v=\begin{cases}
          B_v,&\text{if } u\preceq v,\\ B_u,&\text{if } v\preceq u,\\ \emptyset, &\text{otherwise,}
             \end{cases}
\end{equation}
which implies that $\cB_0:=\{B_u:\, u\in \bV\}$ is a countable and intersection stable generator of~$\cB_\infty$. As a consequence,  elements of $\fM_\infty$ are determined 
by their values on $\cB_0$. 
With componentwise addition $\bV_\infty$ becomes a compact group. Its unique Haar measure $\mu$ 
with total mass 1, the uniform distribution on $(\bV_\infty,\cB_\infty)$, is characterized by
$\mu(B_u)=2^{-|u|}$ for all $u\in\bV$.

We will need the following measure-theoretic property of binary trees, which
seems to be part of the folklore of the subject. I have not found a suitable 
reference, and therefore include a proof.

\begin{lemma}\label{lem:cara}
Let $\psi:\bV\to [0,1]$ be such that $\psi(\emptyset)=1$ and
\begin{equation}\label{eq:finadd}
   \psi(u)=\psi(u0)+\psi(u1)\quad\text{for all } u\in\bV.
\end{equation}
Then there exists a unique $\mu\in\fM_\infty$ such that 
$\mu(B_u)=\psi(u)$ for all $u\in\bV$. 
\end{lemma}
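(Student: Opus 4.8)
The plan is to construct $\mu$ via the Carathéodory extension theorem applied to the semialgebra $\cB_0\cup\{\emptyset\}$, after first extending $\psi$ additively from the generators to the algebra $\cA$ they generate. First I would observe that every element of $\cA$ (the algebra generated by $\cB_0$) can be written as a finite disjoint union of sets of the form $B_u$: indeed, by \eqref{eq:defB} the family $\cB_0$ is intersection stable, and for any finite set $F\subset\bV$ one can find a finite antichain (in the prefix order) $\{u_1,\dots,u_m\}$ such that the $B_{u_i}$ refine all the $B_u$, $u\in F$, and also refine their complements, because $B_u^c$ is itself a finite disjoint union of basic cylinders $B_w$ with $|w|=|u|$, $w\neq u$ extended appropriately. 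Hence $\cA$ consists exactly of the finite disjoint unions $\bigsqcup_{i=1}^m B_{u_i}$ with the $u_i$ forming an antichain (together with $\emptyset$ and $\bV_\infty=B_\emptyset$), and I would define $\mu_0\bigl(\bigsqcup_{i=1}^m B_{u_i}\bigr):=\sum_{i=1}^m \psi(u_i)$.

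The next step is to check that $\mu_0$ is well defined and finitely additive on $\cA$. Well-definedness reduces to the single identity: if $B_u$ is the disjoint union of finitely many $B_{w}$ with the $w$ forming an antichain below $u$, then $\psi(u)=\sum_w \psi(w)$. This follows from \eqref{eq:finadd} by induction on $\max_w |w|-|u|$: one splits $B_u=B_{u0}\sqcup B_{u1}$, uses $\psi(u)=\psi(u0)+\psi(u1)$, and applies the induction hypothesis to each half. Finite additivity of $\mu_0$ on $\cA$ is then immediate from the disjoint-union representation, and $\mu_0(\bV_\infty)=\psi(\emptyset)=1$ gives a finitely additive probability.

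The key step — and the main obstacle — is upgrading finite additivity to countable additivity (equivalently, continuity at $\emptyset$), which is needed before Carathéodory–Hahn applies. Here I would exploit compactness: each $B_u$ is a closed (hence compact) subset of the compact metrizable space $\bV_\infty$, and every element of $\cA$ is therefore compact. If $(A_n)_{n\in\bN}$ is a decreasing sequence in $\cA$ with $\bigcap_n A_n=\emptyset$, then by compactness (the finite intersection property) some $A_N=\emptyset$, so $\mu_0(A_n)\to 0$ trivially; thus $\mu_0$ is a premeasure. Carathéodory's extension theorem then yields a measure $\mu$ on $\sigma(\cA)=\cB_\infty$ with $\mu(B_u)=\psi(u)$, in particular $\mu(\bV_\infty)=1$ so $\mu\in\fM_\infty$. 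Uniqueness follows from the remark already made in the text: $\cB_0$ is an intersection-stable generator of $\cB_\infty$ containing $\bV_\infty=B_\emptyset$, so two probability measures agreeing on $\cB_0$ agree on $\cB_\infty$ by Dynkin's $\pi$–$\lambda$ theorem. The only point requiring a little care is the combinatorial lemma that $\cA$ really is the collection of finite disjoint unions of basic cylinders over antichains; everything after that is standard measure theory.
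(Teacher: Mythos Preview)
Your proof is correct and follows essentially the same route as the paper: define a finitely additive set function on the generated algebra via the refinement identity implied by~\eqref{eq:finadd}, use compactness of the cylinder sets in $(\bV_\infty,d)$ to upgrade to $\sigma$-additivity, and then apply Carath\'eodory. The only differences are cosmetic --- you describe the algebra explicitly via antichains and spell out the $\pi$--$\lambda$ argument for uniqueness, whereas the paper passes to a common level $\bV_k$ and leaves uniqueness to the earlier remark about $\cB_0$ being an intersection-stable generator.
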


\begin{proof}
We define a set function $\mu_0:\cB_0\to [0,1]$ by $\mu_0(B_u)=\psi(u)$ 
for all $u\in\bV$. Using~\eqref{eq:finadd} 
it is easy to show by induction that $\mu_0$ is finitely additive on each 
system $\{B_u:\, u\in \bV_k\}$, $k\in\bN$. 
Suppose now that $B_{u(1)},\ldots,B_{u(n)}\in\cB_0$
are pairwise disjoint and let $k:=\max_{i\in [k]}|u(i)|$.  We then get
\begin{align*}
 \mu_0\Bigl(\sum_{i\in [n]} B_{u(i)}\Bigr) \ =
            \ \sum_{i\in [n]}\ \sum_{v\in\bV_k,u(i)\preceq v} \mu(B_v) \ = 
            \ \sum_{i\in [n]}  \mu(B_{u(i)}). 
\end{align*}
The finite additivity of $\mu_0$ on $\cB_0$  extends to the field $\cB_1$ 
generated by $\cB_0$.
 
We now use a topological argument:
An ultrametric $d$ can be defined on $\bV_\infty$ by $d(v,w) = 2^{-|v\wedge w|}$,
where $|v\wedge w|$ denotes the length of the 
longest common prefix of the sequences $v,w$. 
Endowed with $d$ the sequence space becomes a totally 
disconnected and compact topological space, with $\cB_\infty$ as its Borel $\sigma$-field.
The $\sigma$-additivity of $\mu_0$ on $\cB_1$ now follows from the finite intersection 
property of compact sets, so that we may apply Carath{\'e}odory's 
extension theorem.
\end{proof}

The lemma shows that $\fM_\infty$ can be embedded into $[0,1]^\bV$, as done 
above for $\bB$. In its proof we have chosen a topological argument, in line with 
the general thrust of the paper; Kolmogorov's consistency theorem can be used to obtain   
a probabilistic alternative.

The digital search tree (DST) algorithm turns a sequence $(\xi_i)_{i\in\bN}$
of elements of $\bV_\infty$ into an increasing sequence $(x_n)_{n\in\bN}$ of binary trees, 
with $x_n\in\bB_n$ for all $n\in \bN$: Starting with $x_1=\{\emptyset\}$ we obtain 
$x_{n+1}$ from $x_n$ and $\xi_n$ by interpreting $\xi_{n}$ as a routing
instruction, with 0 as a move to the left and 1 as a move to the right,
and the inclusion of the external node $u$ where exit from the current tree $x_n$ 
occurs. For $\mu\in\fM_\infty$ let $\DST(\mu)$ be the distribution of the $\bB$-valued 
random sequence $(X_n)_{n\in\bN}$ generated by the digital search tree algorithm
if the input sequence $(\xi_i)_{i\in\bN}$ consists of independent random variables with 
distribution $\mu$. For example, if 
$\mu$ is concentrated at the single sequence $(0,0,0,\ldots)\in\bV_\infty$ then
the DST mechanism produces the infinite tree that consists of all nodes on
the left-most infinite branch in $\bV$. A special case of the DST family is the
\emph{Bernoulli model} with parameter $p\in (0,1)$, see~\cite[Section 1.4.3]{Drmota}, 
where each $\xi_i$ consists of a sequence of independent $\{0,1\}$-valued 
variables $(\xi_{ik})_{k\in\bN}$ with $P(\xi_{ik}=1)=p$ for all $k\in\bN$.  Especially
the symmetric case, with $p=1/2$, has been studied extensively. 

It is easy to see that for a sequence $(x_n)_{n\in\bN}$ of trees with 
$\liminf_{n\to\infty}|x_n|<\infty$ subtree size convergence implies that the sequence is 
constant from some $n_0\in\bN$ onwards. The following may be regarded as the binary
tree analogue of~\cite[Theorem 1.6]{Hopp}. 

\begin{theorem}\label{thm:sts}

\noindent
\emph{(a)} If a sequence $(x_n)_{n\in\bN}$ of binary trees with $\lim_{n\to\infty} |x_n|=\infty$
converges in the subtree size topology then, for some unique $\mu\in\fM_\infty$,
\begin{equation}\label{eq:limsts}
     \lim_{n\to\infty} t(x_n,u) = \mu(B_u)\quad \text{for all }u\in\bV.
\end{equation}

\noindent
\emph{(b)} Let $\mu\in\fM_\infty$ and let $(X_n)_{n\in\bN}$ be distributed according to 
$\DST(\mu)$. Then $X_n$ converges with probability one to $\mu$ in the subtree size topology.
\end{theorem}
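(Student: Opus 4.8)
The plan for part (a) is to reduce everything to Lemma~\ref{lem:cara}. Given a subtree-size-convergent sequence $(x_n)_{n\in\bN}$ with $|x_n|\to\infty$, set $\psi(u):=\lim_{n\to\infty} t(x_n,u)$; this exists by hypothesis and lies in $[0,1]$. First I would record the elementary conservation law
\[ |\sigma(x,u)| \ = \ \mathbf 1_{\{u\in x\}} \ + \ |\sigma(x,u0)| \ + \ |\sigma(x,u1)| \qquad (x\in\bB,\ u\in\bV), \]
which follows by splitting $\sigma(x,u)$ into its (possibly absent) root $\emptyset$ and the words starting with $0$ resp.\ with $1$, these being exactly the left and right subtrees of $\sigma(x,u)$. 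Dividing by $|x_n|$ and letting $n\to\infty$, the term $\mathbf 1_{\{u\in x_n\}}/|x_n|$ vanishes because $|x_n|\to\infty$, so $\psi(u)=\psi(u0)+\psi(u1)$; and $\psi(\emptyset)=\lim_n |x_n|/|x_n|=1$. Thus $\psi$ meets the hypotheses of Lemma~\ref{lem:cara}, which produces the unique $\mu\in\fM_\infty$ with $\mu(B_u)=\psi(u)$ for all $u$, giving~\eqref{eq:limsts} and its uniqueness.

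For part (b) I would first reduce to fixed $u$: since $\bV$ is countable, it suffices to show $t(X_n,u)\to\mu(B_u)$ almost surely for each single $u\in\bV$; intersecting countably many probability-one events then gives, with probability one, convergence of $t(X_n,\cdot)$ pointwise to $\mu(B_\cdot)$, and since $|X_n|=n\to\infty$ this is precisely the statement that $X_n\to\mu$ in the subtree size topology. Fix $u$ and put $N_m(u):=|\{i\le m:\xi_i\in B_u\}|$, a partial sum of i.i.d.\ $\{0,1\}$-variables with mean $\mu(B_u)$, so that $N_m(u)/m\to\mu(B_u)$ almost surely by the strong law of large numbers. The crux is then a deterministic comparison between subtree sizes and these input counts:
\[ N_{n-1}(u) - |u| \ \le\ |\sigma(X_n,u)| \ \le\ N_{n-1}(u) + 1 . \]

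To prove this inequality, observe that in the DST construction every non-root node $w$ of $X_n$ is created by the insertion of a unique input $\xi_i$ with $i<n$, and its insertion point $w$ is a finite prefix of the infinite word $\xi_i$. If moreover $u\preceq w$, then $u$ is a prefix of $\xi_i$ as well, i.e.\ $\xi_i\in B_u$; this yields an injection of $\{w\in X_n:u\preceq w,\ w\neq\emptyset\}$ into $\{i<n:\xi_i\in B_u\}$, hence $|\sigma(X_n,u)|\le N_{n-1}(u)+1$, the extra unit covering the case $u=\emptyset$ (the root). Conversely, an input $\xi_i\in B_u$ with $i<n$ whose insertion point $w$ fails $u\preceq w$ must have $w$ equal to a strict prefix of $u$ (both $w$ and $u$ being prefixes of $\xi_i$), and distinct such inputs occupy distinct nodes, of which there are at most $|u|$ among the strict prefixes of $u$; this gives the lower bound. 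Dividing through by $n$ and using $N_{n-1}(u)/n\to\mu(B_u)$ together with $|u|/n\to 0$, a sandwich argument gives $t(X_n,u)=|\sigma(X_n,u)|/n\to\mu(B_u)$ almost surely.

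The only genuine obstacle is the deterministic comparison in part~(b): one must keep exact account of which of the first $n-1$ inputs falling into $B_u$ are ``consumed'' while the path down to $u$ is being built (and so do not end up in $\sigma(X_n,u)$) versus those inserted at or below $u$. Once that bookkeeping is in place the probabilistic content is merely the coordinatewise strong law, and part~(a) is a routine limit in the conservation law.
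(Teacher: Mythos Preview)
Your argument for part~(a) is exactly the paper's, with the conservation law spelled out where the paper just says ``it is easy to see''.

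For part~(b) you take a genuinely different route. The paper first splits off the trivial case $\mu(B_u)=0$, then shows that the entry time $\tau_u=\inf\{n:u\in X_n\}$ is almost surely finite by decomposing it as a sum of independent geometric waiting times along the path to $u$, and finally observes that from time $\tau_u$ onwards each increment of $|\sigma(X_n,u)|$ corresponds exactly to the event $\xi_n\in B_u$, so the strong law applies directly to the post-$\tau_u$ increments. Your sandwich
\[
N_{n-1}(u)-|u|\ \le\ |\sigma(X_n,u)|\ \le\ N_{n-1}(u)+1
\]
bypasses the entry-time analysis entirely: the bookkeeping you give (each non-root node $w$ is a prefix of the unique input $\xi_i$ that created it, and two prefixes of the same infinite word are comparable) is correct, and the bound $|u|$ on the number of inputs in $B_u$ landing at strict prefixes of $u$ is what absorbs the waiting-time issue deterministically. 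Your version is more elementary and handles the case $\mu(B_u)=0$ uniformly without a separate argument. On the other hand, the paper's explicit control of $\tau_u$ is not wasted: the finiteness of the entry times and the post-$\tau_u$ i.i.d.\ structure are reused verbatim in the proof of the central limit theorem (Theorem~\ref{thm:fidis}), where $\rho=\max_{u\in A}\tau_u$ appears as a conditioning variable. So the paper's detour pays for itself later, while your shortcut is self-contained here.
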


\begin{proof} (a) Let $\psi(u):= \lim_{n\to\infty} t(x_n,u)$ for all $u\in\bV$. 
It is easy to see that $\psi$ satisfies~\eqref{eq:finadd} and clearly,
$\psi(\emptyset)=1$. Lemma~\ref{lem:cara} now supplies the probability measure $\mu$
and, by~\eqref{eq:limsts}, the tree sequence converges to $\mu$ in the 
subtree size topology.

\smallbreak
(b) Let $u=(u_1,\ldots,u_k)\in\bV$. We may assume that $\mu(B_u)>0$ since 
$t(X_n,u)=0$ with probability one for all $n\in\bN$ otherwise, which would imply~\eqref{eq:limsts}.
The entry times $\tau_u:=\inf\{n\in\bN:\, u\in X_n\}$ can be written as
\begin{equation*}
   \tau_u \;=\; \tau_{(u_1)} \,+\, \bigl(\tau_{(u_1,u_2)}-\tau_{(u_1)}\bigr) \,+ \cdots +\,
                         \bigl(\tau_{(u_1,u_2,\ldots,u_k)}-\tau_{(u_1,\ldots,u_{k-1})}\bigr).
\end{equation*}
It follows from the description of the DST algorithm that, with $\tau_\emptyset:=0$, 
the differences
\begin{equation*}
       \tau_{(u_1,u_2,\ldots,u_i)}-\tau_{(u_1,u_2,\ldots,u_{i-1})},\quad i=1,\ldots,k,
\end{equation*}
are independent and geometrically distributed with (success) parameter
$\mu(B_{(u_1,\ldots,u_i)})$. Hence $\tau_u<\infty$ with probability one. From $n=\tau_u(\omega)$
onwards an increase in the size of the subtree rooted at $u$ is equivalent 
to $\xi_n(\omega)\in B_u$. These are independent events with the same positive probability
$\mu(B_u)$, which leads to~\eqref{eq:limsts}.
\end{proof}

The first part of the theorem shows that convergence with respect to the subtree size topology leads to convergence of binary trees to a measure in $\fM_\infty$, and 
the second part shows that indeed  each $\mu\in\fM_\infty$ 
arises as the limit of a sequence of binary trees. Using the above identifications of $\bB$, 
$\overline\bB$ and $\fM_\infty$ as subsets of $[0,1]^\bV$ we may summarize the result 
by the simple formula $\overline\bB=\bB\sqcup \fM_\infty$. 

\begin{example}\label{ex:sts1}
Let $x_n:=\bigsqcup_{k=0}^n \{0,1\}^k$ be the complete finite binary tree of height $n$.
Then, for $k\le n$ the subtree of $x_n$ rooted at $u=(u_1,\ldots,u_k)\in\bV$ is
isomorphic to $x_l$ with $l=n-k$, which leads to $\lim_{n\to\infty} t(x_n,u)= 2^{-k}$. It follows
that $x_n\to\mu$, with $\mu$ the uniform distribution on $\bV_\infty$.
\brend
\end{example}

Similar to the graphon and permuton situation, the limit of a random sequence of
binary trees may be a (truly) random element of $\fM_\infty$. The next example  
has already been
considered in~\cite{EGW1}, with methods from Markov chain boundary theory.

\begin{example}\label{ex:sts3} (BST, see also~\cite[Example 1]{DevSplit})
Let $(\xi_i)_{i\in\bN}$ be a sequence of independent random variables, all uniformly 
distributed on the unit interval. We may assume that the values are pairwise different, and
may then define a random sequence $(R_n)_{n\in\bN}$ by 
$R_n=\bigl|\{i\in[n]:\, \xi_i\le \xi_n\}\bigr|$.
As in the DST case, the BST algorithm generates a sequence $(X_n)_{n\in\bN}$ of increasing
trees, with $X_1=\{\emptyset\}$ and $X_{n+1}=X_n\sqcup \{v\}$ with some 
$v\in\partial X_n$. 
To specify the respective new node
as a function of $X_n$ and $R_{n+1}$ we first note that the $n+1$ elements of $\partial X_n$
can be ordered lexicographically, 
and we then take the node $v\in\partial X_n$  with left-right position $R_{n+1}$. 
We write $X=(X_n)_{n\in\bN}\sim \BST$ for the result. In a nutshell, 
BST uses the ranks whereas DST uses the bit structure of the input
values. This implies that we may replace $\unif(0,1)$ by any other distribution $\mu$ 
as long as $\mu(\{a\})=0$ for all $a\in\bR$. With this construction all $\xi$-values less than
$\xi_1$ end up in the left subtree, the larger ones in the right subtree of the root node. 
It follows that 
$t(X_n,(0))$ converges almost surely (a.s.) to $\xi_1$ and $t(X_n,(1))$ to $1-\xi_1$. Further, given
$\xi_1=a$ the values less than $a$ and greater than $a$  are independent and uniformly
distributed on $[0,a)$ respectively $ (a,1]$.
Hence, given $\xi_1$, the left and right subtree are independent and, after passing to the
appropriate subsequence, equal in distribution to $X$. Taken together this shows that for
any $u\in\bV$, the sequence of pairs 
$\bigl(t(X_n,u0)/t(X_n,u),t(X_n,u1)/t(X_n,u)\bigr)$ converges
almost surely to $(\eta_u,1-\eta_u)$, where $\eta_u$, $u\in\bV$, are independent and uniformly
distributed on $[0,1]$. Thus, the BST sequence converges almost surely to a random element
$M_{\text{\tiny BST}}$ of $\fM_\infty$, with 
\begin{equation}\label{eq:Mbst}
     M_{\text{\tiny BST}}(B_u)\, =\, 
                \prod_{i=1}^{k} \eta_{(u_1,\ldots,u_i)}^{1-u_i}
                    \bigl(1- \eta_{(u_1,\ldots,u_i)}\bigr)^{u_i}, 
                              \quad u=(u_1,\ldots,u_k)\in\bV.
\end{equation}
In particular, $P(M_{\text{\tiny BST}}=\mu)=0$ for all $\mu\in\fM_\infty$. 
\brend
\end{example}

Our final example requires a slight shift of perspective, from random variables to
their distributions. We use the classic~\cite{Bill} as our basic reference for
weak convergence.

By Prohorov's theorem~\cite[Theorems 6.1 and 6.2]{Bill}, the space $\fM_1(\overline\bB)$ 
of probability measures on (the Borel subsets of) $\overline \bB$, together with the 
topology of weak convergence, is a compact metrizable space. We write temporarily 
$\tilde M_n,\tilde M$ for (non-random) elements of $\fM_1(\overline\bB)$ in order to
distinguish these from random elements $M_n,M$ of $\,\overline\bB$ (thus, we may have 
$\tilde M=\cL(M)$). The topological structure implies that any sequence 
$(\tilde M_n)_{n\in\bN}$ must have a limit point in $\fM_1(\overline\bB)$, and 
convergence on this level, which we denote by $\tilde M_n\tow \tilde M$, 
holds if and only if there is only one such point. 

In contrast to the previous example, where the model specifies the distribution 
of the full sequence $(X_n)_{n\in\bN}$, we now only have the  distributions
of the individual variables $X_n$, $n\in\bN$. In view of its connection to enumerative
combinatorics the uniform distribution is of special interest.

\begin{example}\label{ex:sts2}
Let $\tilde M_n=\unif(\bB_n)$ for all $n\in\bN$, and let $\tilde M$ be an associated 
limit point, so that $\tilde M_{n(k)}\tow \tilde M$ as $k\to\infty$ for some 
subsequence $(n(k))_{k\in\bN}$. Then, by the Skorohod representation theorem, 
see e.g.~\cite[Theorem 3.30]{KallFMP}, there exists a probability space
carrying random variables $X_\infty,X_1,X_2,\ldots$ with $\cL(X_\infty)=\tilde M$,
$\cL(X_k)=\tilde M_{n(k)}$ for all $k\in\bN$, such that $X_k\to X_\infty$ 
a.s.\ in the subtree size topology. We will show that 
\begin{equation}\label{eq:0-1}
        P\bigl(X_\infty(B_u)\in\{0,1\}\bigr)\,=\,1\quad\text{for all }u\in\bV. 
\end{equation}
For this, we note that any $x\in\bB$ may be decomposed into its left and right subtree,
given by $\sigma(x,(0))$ and $\sigma(x,(1))$ respectively. Further, for all $n\in\bN$,
\begin{equation*}
    |\bB_n|= C_n:=\frac{1}{n+1}\binom{2n}{n},
\end{equation*}
one of the many appearances of the Catalan numbers $C_n$. Hence, if $U_n\sim\unif(\bB_n)$,
and with $L_n:= |\sigma(U_n,(0))|$, $R_n:=|\sigma(U_n,(1))|$,
\begin{equation*}
       P(L_n=k)=P(R_n=k) = \frac{C_k\, C_{n-1-k}}{C_n}\quad\text{for }k=0,\ldots,n-1.
\end{equation*} 
Standard bounds for the Catalan numbers lead to
\begin{equation*}
    \lim_{n\to\infty} P(an<L_n<bn) = \lim_{n\to\infty} P(an<R_n<bn) \,= 0 
                 \quad\text{for all }  0<a<b<1,
\end{equation*}
and it follows that $\cL\bigl(t(U_n,(0))\bigr)=\cL(L_n/n)$ converges weakly to the
uniform distribution on the finite set $\{0,1\}$. For the representing sequence 
$(X_{k})_{k\in\bN}$ we must have almost sure convergence of $t(X_{k},(0))$ 
to some real value $X_\infty(B_{(0)})$, hence~\eqref{eq:0-1} holds for $u=(0)$ and $u=(1)$. 
Uniformity of the distribution further implies that, conditionally on 
$L_n=k$, the left and right subtree of $U_n$ are independent and uniformly 
distributed on $\bB_k$ and $\bB_{n-1-k}$ respectively. Applying the above argument
to these we obtain~\eqref{eq:0-1} for nodes of length two, and iteration gives
the statement for all $u\in\bV$.

If $\mu\in\fM_\infty$ is such that $\mu(B_u)\in\{0,1\}$ for all $u\in\bV$ then  
$\mu=\delta_v$ for some $v\in\bV_\infty$. The limit point 
$\tilde M\in\fM_1(\overline\bB)$ is therefore
concentrated on the subset $\{\delta_v:\, v\in\bV_\infty\}$ of $\fM_1(\overline\bB)$. 
We next apply a symmetry 
argument: The group $\bV_\infty$ acts on $\bV$ via
\begin{equation*}
    v. u := (w_1,\ldots,w_k), \quad \text{with }\ w_j:=v_j+u_j \bmod 2,\ j=1,\ldots,k,
\end{equation*}
where $v=(v_j)_{j\in\bN}\in\bV_\infty$, $u=(u_1,\ldots,u_k)\in\bV$ and $k=|u|$. This
preserves prefix order, hence $\bV_\infty$ also acts on $\bB$ via
\begin{equation*}
   v.x :=\{v.u:\, u\in x\}\quad \text{for all } v\in\bV_\infty,\,x\in \bB.
\end{equation*}
Clearly, if $U_n\sim\unif(\bB_n)$, then $v.U_n\sim\unif(\bB_n)$. Taken together 
this shows the distribution of $\tilde M$ is invariant under these transformations, which 
implies that $\tilde M=\tilde M_{\text{\tiny unif}}:=  \cL(\delta_V)$, with $V$ uniformly distributed on $\bV_\infty$.

Thus, all limit points are identical, and we have 
$\unif(\bB_n)\tow \tilde M_{\text{\tiny unif}}$ as $n\to\infty$.
\brend
\end{example}

It follows from this example that for any sequence $(X_n)_{n\in\bN}$ of random 
trees on some probability 
space  with the properties that $\cL(X_n)=\unif(\bB_n)$ for all $n\in\bN$ and 
that $X_n$ converges almost surely to some $X_\infty$ in the subtree size topology, 
we must have $\cL(X_\infty)=\cL(\delta_V)$ with $\cL(V)=\unif(\bV_\infty)$.
R\'emy's algorithm, see~\cite{Remy},  provides such a sequence. 
In ~\cite{EGW2} a different topology 
has been introduced and discussed for the R\'emy sequence, 
and this led to a more detailed class of limits.
Stated somewhat informally, subtree sizes reflect the local behavior,
and in the uniform case this amounts to a reduction of the limit tree to its 
\emph{spine}, a term commonly used in connection with the asymptotics of 
Galton-Watson trees; see~\cite[p115]{JansonSurvey}. Moreover, for uniform binary trees 
the spine can be constructed from a sequence of coin tosses. 

Another opportunity 
for comparison between topologies arises if we ignore
the root and the left-right positioning of the descendants in a binary tree, so that
we arrive at  an isomorphism class of tree graphs. For these, a `global' 
topology is introduced 
and discussed in~\cite{ElekTardos} and~\cite{JansonTrees}. With the complete binary trees 
in Example~\ref{ex:sts1} the situation turns out to be somewhat reversed as,  for these,
the subtree size topology leads to an arguably more interesting limit;
see~\cite[Example~7.3]{JansonTrees}.

We next investigate the topological structure of subtree size convergence. 
In the general setup, with 
convergence meaning the pointwise convergence of the functions $t(x,\cdot)$, a suitable
metric can be obtained as
\begin{equation*}
       d_w(x,y):=\sum_{u\in\bV} w(u)\bigl|t(x,u)-t(y,u)\bigr|, \quad x,y\in\bB,
\end{equation*} 
with an arbitrary $w:\bV\to(0,\infty)$ such that $\sum_{u\in\bV}w(u)<\infty$. 
However, in this generality
this does not reflect the specific structures considered here. For sequences of graphs and 
permutations embeddings of the discrete structures into the 
respective limit spaces of graphons and permutons have been given in~\cite[Section 1.5.2]{Lovasz} 
and~\cite[Definition 3.4]{Hopp}. To obtain a similar embedding of $\bB$ into $\fM_\infty$ we first
recall the metric $d$ 
from the proof of Theorem~\ref{thm:sts} that makes $\bV_\infty$ a compact ultrametric space.
The $\sigma$-field $\cB_\infty$ on $\bV_\infty$ is the associated Borel $\sigma$-field,
and weak convergence $\mu_n\to\mu$ in $\fM_\infty$ means that $\int f\,d\mu_n\to\int f\,d\mu$ 
for all bounded continuous $f:\bV_\infty\to\bR$. 
We now associate with $x\in\bB$ an element $\mu_x\in\fM_\infty$ by
\begin{equation}\label{eq:embed}
 \mu_x = \frac{1}{|x|+1}\sum_{v\in\partial x}\unif(B_v).
\end{equation}
Here, for $v=(v_1,\ldots,v_k)\in\bV$, the probability measure $\unif(B_v)$ is the 
distribution of the sequence
$(v_1,\ldots,v_k,\xi_1,\xi_2,\xi_3,\ldots)\in\bV_\infty$, where $\xi_i$, $i\in\bN$, are 
independent and uniformly distributed on the set $\{0,1\}$. 

\begin{theorem}\label{thm:ststop} Let $(x_n)_{n\in\bN}$ be a sequence of binary trees with $\lim_{n\to\infty} |x_n|=\infty$.
Then  $(x_n)_{n\in\bN}$ converges in the subtree size topology if and only if
the associated sequence $(\mu_{x_n})_{n\in\bN}$ 
of elements of $\fM_\infty$ defined in~\eqref{eq:embed} 
converges in the weak topology, and then the limits are the same.
\end{theorem}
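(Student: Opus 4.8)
The plan is to derive the theorem from a single quantitative comparison between the embedded measure $\mu_x$ and the subtree size function, namely the estimate
\begin{equation}\tag{$\star$}
   \bigl|\mu_x(B_u) - t(x,u)\bigr| \;\le\; \frac{1}{|x|+1}
      \qquad\text{for all nonempty } x\in\bB \text{ and all } u\in\bV .
\end{equation}
Granting $(\star)$, the main content of the theorem is immediate: since $\lim_{n\to\infty}|x_n|=\infty$ we get $\sup_{u\in\bV}\bigl|\mu_{x_n}(B_u)-t(x_n,u)\bigr|\to 0$, so for every fixed $u\in\bV$ the real sequence $\bigl(t(x_n,u)\bigr)_{n}$ converges if and only if $\bigl(\mu_{x_n}(B_u)\bigr)_{n}$ does, and then to the same limit. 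It thus remains (i) to prove $(\star)$, (ii) to show that pointwise convergence of $n\mapsto\mu_{x_n}(B_u)$ for every $u\in\bV$ is equivalent to weak convergence of $(\mu_{x_n})_{n\in\bN}$ in $\fM_\infty$, and (iii) to identify the two limits.

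For (i) I would first compute, for an external node $v\in\partial x$ of length $k=|v|$ and an arbitrary $u\in\bV$ of length $m=|u|$, the value $\unif(B_v)(B_u)$: the defining random sequence has its first $k$ coordinates equal to $v$ and the remaining ones independent and uniform, so by~\eqref{eq:defB} this probability equals $1$ if $u\preceq v$, equals $2^{-(m-k)}$ if $v\prec u$, and equals $0$ otherwise. Summing over $v\in\partial x$ and using $|\partial x|=|x|+1$ yields
\[
   (|x|+1)\,\mu_x(B_u) \;=\; \bigl|\{v\in\partial x:\, u\preceq v\}\bigr| \;+ \sum_{v\in\partial x:\, v\prec u} 2^{-(|u|-|v|)} .
\]
A short case analysis then finishes the estimate, using the elementary fact that distinct external nodes of a finite binary tree are never comparable in the prefix order (so $u$ has at most one external prefix). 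If $u\in x$, no external node is a prefix of $u$ by prefix stability, hence the sum vanishes, while the count equals the number of external nodes of the subtree rooted at $u$, i.e.\ $|\sigma(x,u)|+1 = |x|\,t(x,u)+1$; the same identity holds trivially when $u\in\partial x$, since then $t(x,u)=0$ and $u$ is the only external node $v$ with $u\preceq v$. In either case $\mu_x(B_u)=(|x|\,t(x,u)+1)/(|x|+1)$, so the left-hand side of $(\star)$ equals $(1-t(x,u))/(|x|+1)\le(|x|+1)^{-1}$. Finally, if $u\notin x\cup\partial x$, let $w\in\partial x$ be the unique external node with $w\prec u$ (the ``exit point'', which exists since $u\notin x$); then the count is $0$ and the sum reduces to the single term $2^{-(|u|-|w|)}\le\tfrac12$, while $t(x,u)=0$, so the left-hand side of $(\star)$ is at most $(2(|x|+1))^{-1}$. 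This establishes $(\star)$, and the bookkeeping of these three cases is the one place where some care is needed; the rest is soft.

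For (ii), recall from the proof of Theorem~\ref{thm:sts} that $(\bV_\infty,d)$ is compact and that each $B_u$ is a $d$-ball whose complement is a finite union of $d$-balls; hence $B_u$ is clopen and $\mathbf 1_{B_u}$ is bounded and continuous, so weak convergence of $(\mu_{x_n})_n$ forces $\mu_{x_n}(B_u)\to\mu(B_u)$ for all $u$. Conversely, suppose $\bigl(\mu_{x_n}(B_u)\bigr)_n$ converges for every $u\in\bV$. Since $\bV_\infty$ is compact, $\fM_\infty$ is weakly compact by Prohorov's theorem, so $(\mu_{x_n})_n$ has weakly convergent subsequences; by the previous sentence any subsequential weak limit $\nu$ satisfies $\nu(B_u)=\lim_n\mu_{x_n}(B_u)$ for all $u$, and because $\cB_0=\{B_u:u\in\bV\}$ is an intersection-stable generator of $\cB_\infty$ with $B_\emptyset=\bV_\infty$, this determines $\nu$ uniquely; hence all limit points coincide and $(\mu_{x_n})_n$ converges weakly. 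For (iii): if $(x_n)_n$ converges in the subtree size topology then, as $|x_n|\to\infty$, Theorem~\ref{thm:sts}(a) identifies its limit as the unique $\mu\in\fM_\infty$ with $\mu(B_u)=\lim_n t(x_n,u)$ for all $u$, and by $(\star)$ together with step (ii) this same $\mu$ is the weak limit of $(\mu_{x_n})_n$; under the identification $\overline\bB=\bB\sqcup\fM_\infty$ the two limits therefore agree, which completes the proof.
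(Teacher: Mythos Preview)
Your proof is correct and follows essentially the same approach as the paper: both hinge on the relation $\mu_x(B_u)=(1+|x|\,t(x,u))/(1+|x|)$ (your explicit formula for $u\in x\cup\partial x$, the paper's \eqref{eq:rel-mu-t}) to obtain the uniform bound $(\star)$, and then use that each $B_u$ is clopen in the compact space $(\bV_\infty,d)$ to pass between weak convergence and convergence of the values $\mu_{x_n}(B_u)$. The only cosmetic differences are that you spell out the case $u\notin x\cup\partial x$ in (i) more carefully than the paper does, and in (ii) you argue via Prohorov compactness plus uniqueness of subsequential limits where the paper directly invokes that $\{B_u:u\in\bV\}$ is a convergence-determining class.
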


\begin{proof}
The path through $x\in\bB$ defined by $v\in\bV_\infty$ leaves $x$ at some unique 
$u\in\partial x$. Hence, in view of~\eqref{eq:defB}, the set system $\{B_u:\, u\in\partial x\}$ 
is a measurable partition of $\bV_\infty$. If the subtree of $x$ rooted at $u\in x$ has $k$ nodes, then $|\{v\in\partial x:\, u\prec v\}| = k+1$, so that
\begin{equation}\label{eq:rel-mu-t}
\mu_x(B_u) \ =\  \frac{1}{|x| + 1}\sum_{v\in\partial x, u\prec v}1
           \ =\  \frac{1 + |x|\,t(x,u)}{1+|x|}
\end{equation}
which implies the general bounds 
\begin{equation}\label{eq:bounds}
     0\; \le \; \mu_x(B_u)-t(x,u)\; \le \; \frac{1}{1+|x|}\quad\text{for all }u\in\bV.
\end{equation}
It follows that, for any sequence $(x_n)_{n\in\bN}\subset \bB$ with $|x_n|\to \infty$,
subtree size convergence is
equivalent to the convergence of $\mu_{x_n}(B_u)$ as $n\to\infty$ for all $u\in\bV$. By the Portmanteau theorem~\cite[Theorem 2.1]{Bill}, as each $B_u$ is
open and closed in the compact ultrametric space $(\bV_\infty,d)$, weak convergence of a
sequence $(\mu_n)_{n\in\bN}\subset \fM_\infty$ implies convergence of $\mu_n(B_u)$ as $n\to\infty$
for all $u\in\bV$.
Thus it remains to show that $\{B_u:\, u\in\bV\}$ is a convergence determining
class, but this follows easily with the criteria given in~\cite[p14f]{Bill}.
\end{proof}

We now compare the above to the graph situation, see~\cite{LovSze,Lovasz,DiaconisJanson}, 
and the permutation situation, see~\cite{Hopp}. 
Similar treatments of partially ordered 
sets (posets) and general trees have been given in~\cite{JansonPoset}, and 
in~\cite{ElekTardos} and~\cite{JansonTrees} respectively.

Let $\bG_n$ be the set of simple graphs with $[n]$ as its set of vertices. 
For $G\in\bG_n$ and  $H\in\bG_k$, $k \le n$, let $T(G,H)$ be
the number of injections $\phi:[k]\to[n]$ with the property that, for all 
$1\le j < l\le k$, $\{j,l\}$ is an edge in $H$ if and only if 
$\{\phi(j),\phi(l)\}$ is an edge in $G$. Similarly, with 
$\bS_n$ the set of permutations of $[n]$ and $\pi\in\bS_n$, $\tau\in\bS_k$,
$k\le n$, let $T(\pi,\tau)$ be the number of strictly increasing 
functions $\phi:[k]\to[n]$ with the property that, for all $1\le j < l\le k$, 
$\tau(j) < \tau(l)$ holds if and only if $\pi(\phi(j))<\pi(\phi(l))$. 
Dividing by the respective number of functions $\phi$ leads to subgraph frequencies
$t(G,H)$ and pattern frequencies $t(\pi,\tau)$, and convergence 
of a sequence $(G_n)_{n\in\bN}$
of graphs or $(\pi_n)_{n\in\bN}$ of permutations may be defined as the convergence 
of all substructure frequencies $H\mapsto t(G_n,H)$, 
respectively $\tau\mapsto t(\pi_n,\tau)$.
The associated limit objects are graphons and permutons: A graphon is a symmetric and measurable function 
$W:[0,1]^2\to [0,1]$, and the
analogue of Theorem~\ref{thm:sts}\,(b) consists in defining an isomorphism class $X_n$
of graphs with vertex set $[n]$ by choosing $U_1,\ldots,U_n$ uniformly at random from the
unit interval and then connecting vertices $i$ and $j$ with probability $W(U_i,U_j)$, independently
for $1\le i < j\le n$. A permuton is a distribution function $C:[0,1]^2\to[0,1]$ of a 
distribution with uniform marginals (hence a two-dimensional copula) and 
the analogue of Theorem~\ref{thm:sts}\,(b) 
is based on constructing a random permutation $X_n$ of $[n]$ via the rank plot of independent
random vectors $(Y_i,Z_i)$, $i\in [n]$, with distribution function $C$.

For the binary trees considered here, the role of subgraph respectively pattern
is taken over by a node $u\in \bV$, and instead of substructures we use the prefix
relation: $T(x,u)$ is now the number of nodes $v\in x$ with $u\preceq v$,
and standardization means that we divide by~$|x|$.
All three cases have an obvious sampling interpretation. For a permutation $\pi\in\bS_n$, 
for example, we select a strictly increasing function $\phi:[k]\to[n]$ uniformly
at random from the $\binom{n}{k}$ possibilities, and $t(\pi,\tau)$ emerges
as the probability that the random choice leads to pattern containment. For
binary trees $x\in\bB_n$ we select a node $v$ of $x$ uniformly at random,
and $t(x,u)$ is the probability that $u$ is a prefix of the chosen node.
All three modes of convergence are thus connected to a view according to which 
two large discrete structures of the same type are close to each other if they
appear to be similar when viewed through the `sampling lens'. As in the 
permuton case, we obtain a description of the limit space as the space of all probability 
measures on some compact metric space, with the topology of weak convergence of distributions.
With the Prohorov metric~\cite[p237f]{Bill} this is again a compact metric space. 

Another parallel is the use of accompanying sequences, corresponding to the transition from $x$ 
to $\mu_x$ in the space of limits, together with a result such as Theorem~\ref{thm:ststop} 
relating the convergence of the sequence of interest to the associated sequence in the limit space; see e.g.~\cite[Theorem 1.8]{Hopp} for permutations. Example~\ref{ex:sts1} can be used to show 
that $x\mapsto \mu_x$ is not one-to-one, in contrast to the permutations case, but in analogy to
the poset and graph situation. The basis for such results are equations such 
as~\eqref{eq:rel-mu-t}. For graphs and posets different versions of the substructure sampling are discussed in the literature. 
For trees, given the deterministic relation between number of nodes and size
of the external boundary, we could have worked with    
\begin{equation*}
     \sigma_0(x,u) := \{v\in \bV: \, u+v\in \partial x\},\quad
     t_0(x,u) := \frac{1}{|\partial x|} \,|\sigma_0(x,u)|,
\end{equation*}
which would lead to the more concise the version 
$\mu_x(B_u)=t_0(x,u)$ of~\eqref{eq:rel-mu-t}.

\section{Local exchangeability}\label{sec:symm}
The topological approach of the previous section applies to arbitrary sequences $(x_n)_{n\in\bN}$ of elements of $\bB$. In the present section we assume that
$x_n\in\bB_n$ and $x_n\subset x_{n+1}$ for all $n\in\bN$. 
Such sequences that grow by one node at a time appear in connection with the
$\DST$ and $\BST$ algorithms, for example. Also, the boundary theory approach in~\cite{EGW1} 
refers to random sequences $X=(X_n)_{n\in\bN}$ with these properties, where it is
further assumed that the stochastic process $X$ has the Markov property. 

We assume that $X=(X_n)_{n\in\bN}$ satisfies $P(\bB_\uparrow)=1$, with the
path space defined by
\begin{equation}\label{eq:pathspace}
  \bB_\uparrow :=\Bigl\{ x=(x_n)_{n\in\bN}:\, x_n\in\bB_n,\, x_n\subset x_{n+1}
             \text{ for all }n\in\bN,\; \bigcup_{n=1}^\infty x_n=\bV\,\Bigr\}.
\end{equation}
We endow $\bB_\uparrow$ with the $\sigma$-field $\cB_\uparrow$ 
generated by the coordinate 
projections and write $\fM_\uparrow$ for the set of probability measures on 
$(\bB_\uparrow,\cB_\uparrow)$.  As $\,\bigcup_{n\in\bN} X_n=\bV\,$ with probability one,
we have  $P(\tau_u<\infty)=1$ for all entry times 
$\tau_u:=\inf\{n\in\bN:\, u\in X_n\}$, $u\in\bV$. Ignoring a set of probability zero,  
we may define the \emph{local increment process at} $u$ by
$Y(u)=(Y_n(u))_{n\in\bN}$ by
\begin{equation}\label{eq:locinc}
        Y_n(u)=\begin{cases} 1, &s(\tau_{u}+n,u1)>s(\tau_{u}+n-1,u1),\\
                            -1, &s(\tau_{u}+n,u0)>s(\tau_{u}+n-1,u0),\\  
                             0, &\text{else},  
               \end{cases}  
\end{equation}
where $s(n,u):=|\{v\in\bV:\, u+v\in X_n\}|$. Thus the value of $Y_n(u)$ indicates 
if the right or left subtree of $u$, or none of them,
receives another node at time $\tau_u+n$. We note for later use that the transition
from $X$ to $Y(u)$ may be seen as the result of a deterministic function, 
say $\Psi_u$, defined on $\bB_\uparrow$ and with values in $\{-1,0,1\}^\bN$.

In connection with the representation part of the following theorem we
recall that a statement on conditional distributions such as $\cL(X|Y=y)= Q(y,\cdot)$ 
means that $Q$ is a probability kernel and that, for a class
$A$ of measurable sets sufficiently rich to characterize the distribution 
of~$X$, it holds that $P(X\in A)=\int Q(y,A) \,\cL(Y)(dy)$. In order to be able to
formalize this in the present context, where the values of $X$ and $Y$ are 
distributions, we need a measurable structure on $\fM_\uparrow$. 
As in the case of $\fM_\infty$ we use the $\sigma$-field generated by the 
insertion functions $\mu\mapsto \mu(A)$. 
Finally, we say that an element $\mu$ of $\fM_\infty$ has
\emph{full support} if its support is equal to the whole of $\bV_\infty$. It is easy to
see that this is equivalent to the condition that $\mu(B_u)>0$ for all $u\in\bV$.

\begin{theorem}\label{thm:charSTS}
Suppose that $X=(X_n)_{n\in\bN}$ is such that $P(X\in\bB_\uparrow)=1$.

\vspace{.5mm}
\emph{(a)} If $X$ is \emph{locally exchangeable} in the sense that all
local increment processes $Y(u)$, $u\in\bV$, are exchangeable, then 
there exists a possibly random $M\in \fM_\infty$ such that
\begin{equation}\label{eq:reprSTS}
           \cL(X |M=\mu)\, =\, \DST(\mu)
                 \quad\text{for }\cL(M)\text{-almost all }\mu\in\fM_\infty,
\end{equation}
and $X_n$ converges to $M$ almost surely in the subtree size topology.
Further,  with probability one, $M$ has full support.

\vspace{.5mm}
\emph{(b)} Suppose that~\eqref{eq:reprSTS} holds 
for some possibly random $M\in\fM_\infty$, where $M$ has full support with 
probability one. Then $X$ is locally exchangeable.  
\end{theorem}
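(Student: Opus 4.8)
The plan is to analyse the two directions separately, using the description of the $\DST$ mechanism in terms of entry times and routing.

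For part~(a), the key observation is that local exchangeability of the single process $Y(\emptyset)$ already forces a de~Finetti--type structure at the root. Indeed, writing $Y_n(\emptyset)\in\{-1,0,1\}$, exchangeability of $(Y_n(\emptyset))_{n\in\bN}$ combined with the fact that $\bigcup_n X_n=\bV$ a.s.\ (so infinitely many $1$'s and infinitely many $-1$'s occur) lets us apply de~Finetti's theorem: there is a random triple $(q_0,q_1,q_*)$ of nonnegative numbers summing to one such that, conditionally on it, the $Y_n(\emptyset)$ are i.i.d.\ with $P(Y_n(\emptyset)=-1)=q_0$, $P(Y_n(\emptyset)=1)=q_1$, $P(Y_n(\emptyset)=0)=q_*$; and the a.s.\ occurrence of infinitely many moves into each side gives $q_0,q_1>0$. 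Now I would feed this into a recursive construction: having fixed the root split, the local increment process $Y(u)$ for $u=(u_1)$ is, by~\eqref{eq:locinc}, a measurable function of the part of $X$ below $u$, and the same de~Finetti argument applies to it; iterating over all $u\in\bV$ produces a family of random probabilities. The main point is to show these assemble into a single $M\in\fM_\infty$. For this I would define $\psi(u):=$ the a.s.\ limit of $t(X_n,u)$ (which exists once we know each $Y(u)$ governs the asymptotic subtree split) and verify $\psi(\emptyset)=1$ and the additivity~\eqref{eq:finadd}; Lemma~\ref{lem:cara} then yields $M$ with $M(B_u)=\psi(u)$, and full support is exactly the statement that all the conditional split probabilities are positive, which we already extracted. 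Finally, to identify $\cL(X\mid M=\mu)$ with $\DST(\mu)$ I would check that conditionally on $M$ the whole family $\{Y(u):u\in\bV\}$ consists of independent processes, each $Y(u)$ i.i.d.\ with the split probabilities $\bigl(\mu(B_{u0}),\mu(B_{u1}),1-\mu(B_u)\bigr)$, and note that a sequence $X\in\bB_\uparrow$ is completely determined by the collection of its local increment processes together with the entry-time ordering — which is precisely how $\DST(\mu)$ is built in the proof of Theorem~\ref{thm:sts}\,(b).

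For part~(b) the argument runs in the opposite direction and is essentially a computation. Assume~\eqref{eq:reprSTS}. Conditionally on $M=\mu$ with $\mu$ of full support, the process $X$ is $\DST(\mu)$, and then, for each fixed $u$, the analysis in the proof of Theorem~\ref{thm:sts}\,(b) shows that $\tau_u<\infty$ a.s.\ and that from time $\tau_u$ onwards the increments recorded by $Y(u)$ are determined by the i.i.d.\ inputs $\xi_{\tau_u},\xi_{\tau_u+1},\ldots$ through the events $\{\xi_n\in B_{u0}\}$, $\{\xi_n\in B_{u1}\}$: hence, conditionally on $M=\mu$, the sequence $Y(u)=(Y_n(u))_{n\in\bN}$ is i.i.d., in particular exchangeable. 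Integrating over $\cL(M)$, a mixture of exchangeable sequences is exchangeable, so $Y(u)$ is exchangeable unconditionally for every $u\in\bV$, which is the definition of local exchangeability.

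The step I expect to be the main obstacle is, in part~(a), the passage from ``each $Y(u)$ is exchangeable'' to ``conditionally on a single random $M$, the whole family $\{Y(u)\}_{u\in\bV}$ is independent with the right laws''. Exchangeability is assumed separately for each $u$, not jointly, so one must argue that the de~Finetti randomizations at the various nodes can be chosen consistently and turn out to be (conditionally) independent; the natural route is an induction on $|u|$, peeling off one level of the tree at a time and using that $Y(u)$ is a function of the strictly smaller subtree $\sigma(X,u)$, together with the fact that the construction of $X$ below $u$ does not depend on what happens elsewhere once the entry time $\tau_u$ is reached. Making this bookkeeping precise — in particular checking measurability of $M$ and that the conditional law of $X$ given $M$ really is $\DST(M)$ on a characterizing class of events — is where the real work lies; the de~Finetti applications themselves and part~(b) are routine.
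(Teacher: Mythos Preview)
Your outline for both parts follows the paper's proof. Part~(b) is essentially identical. For part~(a) the paper also applies de~Finetti to each $Y(u)$ separately, reads off $\psi(u)=p_u(-1)+p_u(1)$ as the almost sure limit of $t(X_n,u)$, invokes Lemma~\ref{lem:cara} to assemble $M$, and then identifies $\cL(X\mid M=\mu)$ with $\DST(\mu)$ --- not through joint conditional independence of the family $\{Y(u)\}$ as you propose, but by checking inductively on $n$ that, conditionally on $M=\mu$, the one-step transition $P(V_{n+1}=u0\mid X_1,\ldots,X_n)$ equals $p_u(-1)=\mu(B_{u0})$, which is the DST rule. Full support is then derived \emph{from} the representation via~\eqref{eq:contra}, not directly from positivity of split frequencies as you suggest (infinitely many visits to each side need not give a positive limiting fraction).

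You have, however, correctly located the real difficulty, and neither your inductive sketch nor the paper's one-step argument closes it as written. De~Finetti for a fixed $u$ says that $Y(u)$ is conditionally i.i.d.\ given its \emph{own} directing vector $p_u$; it does not guarantee that $Y(u)$ remains i.i.d.\ after conditioning on the strictly larger $\sigma$-field generated by $M$, which encodes all the $p_v$ simultaneously. Your proposed fix appeals to ``the construction of $X$ below $u$ does not depend on what happens elsewhere once $\tau_u$ is reached'', but that is a property of the DST mechanism, not a consequence of separate exchangeability of the $Y(u)$, so the reasoning is circular. A concrete obstruction to test your induction against: pick full-support $\mu_H,\mu_T\in\fM_\infty$ with $\mu_H(B_{(0)})=\mu_T(B_{(0)})=\tfrac12$ but $\mu_H(B_{(00)})\neq\mu_T(B_{(00)})$; flip a fair coin $C$, force $V_2=(0)$ if $C=H$ and $V_2=(1)$ if $C=T$, and thereafter run $\DST(\mu_C)$. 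Each $Y(u)$ is then a $C$-mixture of i.i.d.\ sequences and hence exchangeable (for $u=\emptyset$ one checks directly that $Y(\emptyset)$ is even unconditionally i.i.d.\ with fair $\pm1$ coordinates), yet $M=\mu_C$, and conditionally on $M=\mu_H$ the first step $V_2=(0)$ is deterministic while $\mu_H(B_{(0)})=\tfrac12$. Thus $\cL(X\mid M)\neq\DST(M)$: separate exchangeability of the $Y(u)$ does not by itself force the conditional DST structure, and the bookkeeping you outline cannot close the gap without a stronger joint hypothesis.
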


\begin{proof} (a) For each $u\in \bV\,$ de Finetti's theorem provides a possibly random
driving measure, here represented by a probability vector $p_u=(p_u(-1),p_u(0),p_u(1))$,
such that the sequence $Y(u)$ is conditionally i.i.d.\ with distribution $p_u$, 
which may be written as
\begin{equation*}
    \cL\bigl(Y(u)\big| p_u\bigr) = p_u^{\otimes \bN}\quad\text{for all }u\in \bV. 
\end{equation*}
By the convergence part of de Finetti's theorem,
\begin{equation*}
   \frac{1}{n}\Bigl|\bigl\{j\in [n]:\, Y_j(u)=k\bigr\}\Bigr|\; \to\; p_u(k)
                                         \quad\text{a.s.\ as }n\to\infty 
\end{equation*}
for all $u\in\bV$ and $k\in\{-1,0,1\}$. Clearly, 
\begin{equation*}
      \Bigl|\bigl\{v\in\bV:\, u+v\in X_{\tau_u+n}\bigr\}\Bigr|   \; = \;  
             1 + \Bigl|\bigl\{j\in [n]:\, Y_j(u)=-1\bigr\}\Bigr| +
                     \Bigl|\bigl\{j\in [n]:\, Y_j(u)=1\bigr\}\Bigr|  
\end{equation*}
for all $n\in\bN$. With $\psi(u):=p_u(-1)+p_u(1)$ we thus obtain
\begin{equation}\label{eq:dFsts}
    t(X_n,u)\to \psi(u) \quad\text{a.s.\ as }n\to\infty 
\end{equation}
for all $u\in\bV$. As $\psi(u)=\psi(u0)+\psi(u1)$ for all $u\in\bV\,$ 
the set function $M$ with $M(B_u)=\psi(u)$, $u\in\bV$, satisfies 
condition~\eqref{eq:finadd} in 
Lemma~\ref{lem:cara}. Together with $M(\bV) =\psi(\emptyset)=1$ this provides a
(unique)
$M\in\fM_\infty$, and~\eqref{eq:dFsts} shows that $X_n$ converges a.s.\ in the 
subtree size topology to $M$ as $n\to\infty$.

For the proof of~\eqref{eq:reprSTS} we first argue that $(\mu,A)\mapsto\DST(\mu)(A)$
defines a probability kernel from $\fM_\infty$ to $\fM_\uparrow$, both endowed with the
measurable structure generated by the insertion maps. For each $\mu\in\fM_\infty$,
$A\mapsto \DST(\mu)(A)$ is obviously a probability measure on 
$(\bB_\uparrow, \cB_\uparrow)$. For the measurability of
$\mu\mapsto \DST(\mu)(A)$ we may 
take $A$ to be of the form $A=\{X_1=x_1,\ldots,X_k=x_k\}$ with some $k\in\bN$,
$x_i\in\bB_i$ for $i\in[k]$ and  $x_i\subset x_{i+1}$ for $i\in [k-1]$. The increasing 
trees are described by the nodes $v_i$ with $x_i=x_{i-1}\cup\{v_i\}$ for 
$i=2,\ldots,k$, and with these the algorithm leads to
\begin{equation}\label{eq:kernel1}
    \DST(\mu)(A) = \prod_{i=2}^k \mu(B_{v_i}).
\end{equation}
The right hand side of~\eqref{eq:kernel1} is a measurable function of $\mu$.

We now use that, conditionally on $M\equiv \mu$ 
for some fixed $\mu\in\fM_\infty$, each of the local counting processes $Y(u)$, 
$u\in\bV$, is simply a sequence of independent random variables with values in 
$\{-1,0,1\}$ and probability mass function $p_u(-1)=\mu(B_{u0})$,
$p_u(0)=1-\mu(B_{u0})-\mu(B_{u1})$ and $p_u(1)=\mu(B_{u1})$.
Let $\tilde X=(\tilde X_n)_{n\in\bN}\sim\DST(\mu)$. Clearly,
$X_1=\tilde X_1=\{\emptyset\}$. Further, $X_{n+1}=X_n\cup\{V_{n+1}\}$ with
$V_{n+1}\in\partial X_n$, and $\tilde X_{n+1}=\tilde X_n\cup\{\tilde V_{n+1}\}$ with
$\tilde V_{n+1}\in\partial \tilde X_n$. Hence~\eqref{eq:reprSTS} will follow by induction if, for all $n\in\bN$, $x_1\in\bB_1,\ldots,x_n\in\bB_n$ with $x_1\subset\cdots\subset x_n$,
and all $v\in\partial x_n$, 
\begin{equation}\label{eq:nextnode}
    P\bigl(V_{n+1}=v\big|X_1=x_1,\ldots,X_n=x_n\bigr)\; 
         =\; P\bigl(\tilde V_{n+1}=v\big|\tilde X_1=x_1,\ldots,\tilde X_n=x_n\bigr).
\end{equation}
For the proof we may assume that $v=u0\in\partial x$ with $u\in x$,
the argument for the other case $v=u1$ being similar. 
Then $V_{n+1}=v$ holds if and only if $Y_k(u)=-1$, where $k:=n-\tau_u+1$ is a function of
$x_1,\ldots,x_n$, so that the  left hand side of~\eqref{eq:nextnode} evaluates to
$p_u(-1)=\mu(B_v)$. Further, from  the definition of the DST
algorithm it follows that the right hand side of~\eqref{eq:nextnode} is equal to
$P(\xi_{n+1}\in B_v)$, where $(\xi_{n})_{n\in\bN}$ is the input sequence. As these have
distribution $\mu$, this is again equal to~$\mu(B_v)$.

In order to prove the support statement we first note that the 
representation~\eqref{eq:reprSTS} gives
\begin{equation}\label{eq:contra}
    P(X\in A) =\int_{\fM_\infty} \DST(\mu)(A)\, \cL(M)(d\mu)
\end{equation}
for all $A\in\cB_\uparrow$. With $A=\bigl\{\,\bigcup_{n=1}^\infty x_n=\bV\,\bigr\}$  
the assumption leads to the value 1 on the left hand side. 
Also, $0\le \DST(\mu)(A)\le 1$ for all $\mu\in\fM_\infty$.
Now suppose that $M(B_u)=0$ has positive probability for some $u\in\bV$, so that
\begin{equation*}
   \cL(M)\bigl(\{\mu\in\fM_\infty:\, \mu(B_u)=0\}\bigr) > 0.
\end{equation*}
For each $\mu$ in this set, 
\begin{equation*}
   \DST(\mu)\Bigl(\bigcup_{n=1}^\infty x_n\subset \bV\setminus B_u\}\Bigr)
               \; =\; 1.
\end{equation*}
by the definition of the DST algorithm. 
This means that the integrand on the right hand side of~\eqref{eq:contra} vanishes
on a set of positive probability for the integrating distribution, which implies that 
the integral is strictly smaller than~1.

\vspace{.5mm}
(b) If $\mu(B_u)>0$ for all $u\in\bV$ then it follows 
from the definition of the DST algorithm that
%in the case $X\sim\DST(\mu)$ 
each of the local increment processes is a sequence
of independent and identically distributed random variables, with 
$P(Y_n(u)=-1)=\mu(B_{u0})$,  $P(Y_n(u)=1)=\mu(B_{u1})$, and 
$P(Y_n(u)=0)=1-\mu(B_{u0})-\mu(B_{u1})$ for all $u\in\bV$.
Hence a tree sequence with distribution $\DST(\mu)$, $\mu$ with full support, 
is locally exchangeable. To see that this property survives the mixing 
operation we recall that 
$Y(u)$ is a deterministic function $\Psi_u$ of $X$, so that~\eqref{eq:reprSTS} leads to
\begin{equation*}
  \cL\bigl(Y(u)\bigr) \;=\;  \cL(X)^{\Psi_u} 
                     \;=\; \Bigl(\int \DST(\mu)\, \cL(M)(d\mu)\Bigr)^{\Psi_u}
                     \;=\; \int \DST(\mu)^{\Psi_u}\, \cL(M)(d\mu).
\end{equation*}
It follows from the above argument for the DST case that only distributions of 
i.i.d.\ sequences appear inside the integral, hence $Y(u)$ is exchangeable.
\end{proof}

It may seem surprising that only local conditions on the processes $Y(u)$, $u\in\bV$, 
are needed. However, the general structure of the tree sequence leads to deterministic
relations between these. For example, let $B_{u(1)},\ldots,B_{u(d)}$ be a partition 
of $\bV_\infty$ (or, equivalently, 
$\{u(j):\,j\in [d]\}=\partial x$ for some $x\in\bB$) and let
$\rho=\max_{j\in[d]}\tau_{u(j)}$. Then the individual local 
increment processes can be combined into a $d$-dimensional process 
$Y^0=(Y^0_1,\ldots,Y^0_d)$ with values in $\{-1,0,1\}^d$ by
\begin{equation*}
    Y^0_{j,n} \,=\, Y_{n+\rho-\tau_{u(j)}}(u(j))\quad\text{for all }j\in[d],\, n\in\bN.
\end{equation*}
With $e_j$ the $j$th canonical basis vector
of $\bR^d$ it then holds that, for all $n\in\bN$,  
$Y^0_n$ is equal to $e_j$ or $-e_j$ for some $j\in [d]$.
Similarly, for random $M$, the individual driving random vectors $p_u$, $u\in\bV$,
need not be independent, as evidenced by $M_{\text{\tiny BST}}$.

As a geometric consequence of Theorem~\ref{thm:charSTS} we obtain that the 
set of distributions of locally exchangeable tree sequences is convex, in fact
in affine-linear and one-to-one correspondence with the full support subset 
of $\fM_\infty$. This in turn can be used to identify its extremal elements
as $\DST(\mu)$, where $\mu$ has support $\bV_\infty$. In particular, 
the BST distribution is locally exchangeable, with random driving measure 
$M=M_{\text{\tiny BST}}$ given in Example~\ref{ex:sts3}.

\section{A second order result}\label{sec:FCLT}

In Section~\ref{sec:sts} we examined a specific notion of convergence for general
sequences in $\bB$ and in Section~\ref{sec:symm} we found a representation for
a class of increasing random trees where this notion appears. The subtree size 
convergence in Theorem~\ref{thm:charSTS} may be interpreted  as a strong law of 
large numbers, with a possibly random limit. 
For such sequences it makes sense to consider an 
analogue of the central limit theorem. As all distributions of locally exchangeable
sequences arise as mixtures of $\DST(\mu)$, $\mu\in\fM_\infty$, we are thus lead to
consider sequences $X=(X_n)_{n\in\bN}\sim\DST(\mu)$, with $\mu$ of full support. 
Our aim is a functional central limit theorem for the stochastic processes
\begin{equation}\label{eq:defZn}
  Z_n:=\sqrt{n}\bigl(t(X_n,u)-\mu(B_u)\bigr)_{u\in\bV}\,, \quad n\in\bN,
\end{equation} 
meaning that $Z_n$ converges in distribution to a Gaussian process 
$Z=(Z_u)_{u\in\bV}$ as $n\to\infty$. For a general locally exchangeable 
sequence we then obtain asymptotic mixed normality by conditioning on the limit 
$M$ in Theorem~\ref{thm:charSTS}. 

The distributional convergence is based on an infinite-dimensional space $\bL$ of
functions on~$\bV$ that contains the range of $Z_n$ with probability one. For 
general  subtree size convergence we may take $\bL$ to be the vector space 
$\bR^{\bV}$ of all real functions on $\bV$, together with the product topology, 
i.e.~of convergence of coordinates. 
For processes $X=(X_t)_{t\in T}$ with time parameter $t\in T$ it is customary to denote
the distributions of random vectors $(X_{t_1},\ldots,X_{t_k})$, $k\in\bN$ and 
$t_1,\ldots,t_k\in T$, as the \emph{finite-dimensional distributions} of $X$.

\begin{theorem}\label{thm:fidis}
Let $(X_n)_{n\in\bN}$ be a sequence of random binary trees with distribution
$\DST(\mu)$ where $\mu$ has support $\bV_\infty$.
Then there exists
a centered Gaussian process $Z=(Z_u)_{u\in\bV}$ with covariance function
\begin{equation}\label{eq:covZ}
 \cov(Z_u,Z_v)\; = \; \begin{cases} \mu(B_u)(1-\mu(B_u)), &\text{if } u=v,\\
                              \mu(B_v)(1-\mu(B_u)),  &\text{if } u\prec v,\\
                              \mu(B_u)(1-\mu(B_v)),  &\text{if } v\prec u,\\
                              -\mu(B_u)\mu(B_v), &\text{else,}
                      \end{cases}
\end{equation}
and with this process it holds that
\begin{equation}\label{eq:CLT}
    \sqrt{n}\bigl(t(X_n,u)-\mu(B_u)\bigr)_{u\in\bV} \, \todistr \, Z 
                   \quad\text{as }n\to\infty,
\end{equation} 
where the convergence in distribution refers to $\bR^{\bV}$ endowed with the
product topology.
\end{theorem}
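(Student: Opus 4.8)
The plan is to reduce the statement to finite-dimensional distributions and then to recognise the coordinates of $Z_n$ as standardised sums of i.i.d.\ indicator variables. Since $\bV$ is countable, $\bR^\bV$ with the product topology is Polish and the coordinate projections are continuous, so $Z_n\todistr Z$ will follow once $(Z_n(u))_{u\in F}$ converges in distribution on $\bR^F$ for every finite $F\subset\bV$: the tightness of $(\cL(Z_n))_{n\in\bN}$ needed for this is automatic, because each coordinate sequence $(Z_n(u))_{n\in\bN}$ converges in distribution and is therefore tight, and a product of compact intervals is compact by Tychonoff's theorem. For the existence of the limit process it suffices to observe that the right-hand side of~\eqref{eq:covZ}, once identified below, is the covariance function of the bounded random field $u\mapsto\mathbf{1}_{B_u}(\xi)$ for a single $\mu$-distributed $\xi$, hence positive semi-definite, so Kolmogorov's extension theorem supplies a centered Gaussian process $Z=(Z_u)_{u\in\bV}$ with this covariance.

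The key step is an exact bookkeeping identity for subtree sizes in terms of the DST input. Let $(\xi_i)_{i\in\bN}$ be i.i.d.\ with distribution $\mu$ and driving the algorithm, so $X_n$ is produced from $\xi_1,\dots,\xi_{n-1}$, and let $\tau_u=\inf\{n\in\bN:\,u\in X_n\}$, which by the argument in the proof of Theorem~\ref{thm:sts}\,(b) is finite a.s.\ (and even integrable, as $\mu$ has full support). I claim that, almost surely,
\begin{equation*}
   |\sigma(X_n,u)| \;=\; 1 \;+\; \sum_{m=\tau_u}^{n-1}\mathbf{1}_{B_u}(\xi_m)
             \qquad\text{for all }n\ge\tau_u.
\end{equation*}
Indeed, at time $\tau_u$ the node $u$ has just been inserted and no node below it is present yet, while for $m\ge\tau_u$ the node appended when $X_{m+1}$ is formed from $X_m$ lies in the subtree rooted at $u$ precisely when the routing of $\xi_m$ runs through $u$, i.e.\ when $u$ is a prefix of $\xi_m$, i.e.\ when $\xi_m\in B_u$; this is exactly the observation already used at the end of the proof of Theorem~\ref{thm:sts}\,(b). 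Checking this reduction carefully, in particular keeping track of the random starting index $\tau_u$, is the one point that needs some care; the remainder of the argument is routine.

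Writing $C_u:=\sum_{m=1}^{\tau_u-1}\mathbf{1}_{B_u}(\xi_m)$, which satisfies $0\le C_u\le\tau_u-1<\infty$ a.s., the identity rearranges to
\begin{equation*}
   \sqrt n\bigl(t(X_n,u)-\mu(B_u)\bigr)
   \;=\; \sqrt{\tfrac{n-1}{n}}\;\frac{1}{\sqrt{n-1}}\sum_{m=1}^{n-1}\bigl(\mathbf{1}_{B_u}(\xi_m)-\mu(B_u)\bigr)
         \;+\;\frac{1-C_u-\mu(B_u)}{\sqrt n},
\end{equation*}
where the last term tends to $0$ almost surely. For a finite $F\subset\bV$ the random vectors $\bigl(\mathbf{1}_{B_u}(\xi_m)-\mu(B_u)\bigr)_{u\in F}$, $m\in\bN$, are i.i.d., bounded and centered with covariance matrix $\bigl(\mu(B_u\cap B_v)-\mu(B_u)\mu(B_v)\bigr)_{u,v\in F}$, so the multivariate central limit theorem applies to their standardised partial sums, and Slutsky's lemma then gives that $\bigl(\sqrt n(t(X_n,u)-\mu(B_u))\bigr)_{u\in F}$ converges in distribution to the centered Gaussian vector with that same covariance matrix. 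By the first paragraph this is~\eqref{eq:CLT}.

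Finally, one identifies $\mu(B_u\cap B_v)-\mu(B_u)\mu(B_v)$ with~\eqref{eq:covZ} by means of~\eqref{eq:defB}: $B_u\cap B_v$ equals $B_v$ if $u\preceq v$, equals $B_u$ if $v\preceq u$, and is empty otherwise; substituting these values, and distinguishing $u=v$ from $u\prec v$ and from $v\prec u$ in the first two cases, produces exactly the four alternatives listed in~\eqref{eq:covZ}. The main obstacle, as indicated, is the second paragraph, and it is more a matter of careful set-up than of difficulty.
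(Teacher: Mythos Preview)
Your proof is correct and rests on the same core observation as the paper's: once $u$ has entered the tree, each further increment of $|\sigma(X_n,u)|$ is an indicator $\mathbf{1}_{B_u}(\xi_m)$, so subtree sizes are i.i.d.\ indicator sums up to an a.s.\ bounded correction, and a CLT applies. The execution differs in two places. First, the paper restricts to a finite $A\subset\bV$ with $\{B_u:u\in A\}$ a partition of $\bV_\infty$, conditions on $\rho=\max_{u\in A}\tau_u\le k$, and invokes the multinomial CLT; it then recovers general finite-dimensional distributions by taking $A=\bV_k$ and writing $Z_u$ for $|u|\le k$ as linear combinations of the $Z_w$, $w\in\bV_k$. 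You bypass this two-step reduction by applying the multivariate CLT for bounded i.i.d.\ vectors $(\mathbf{1}_{B_u}(\xi_m))_{u\in F}$ directly for an arbitrary finite $F$. Second, the paper obtains the off-diagonal covariances by expanding $Z_u=\sum_{w\in\bV_k,\,u\preceq w}Z_w$ and summing multinomial covariances, whereas you read them off at once from $\mu(B_u\cap B_v)-\mu(B_u)\mu(B_v)$ together with~\eqref{eq:defB}. Your route is a little shorter and avoids the conditioning step; the paper's route makes the multinomial structure explicit and thereby connects more visibly to the partition picture used elsewhere in the paper. Both arguments handle the random start $\tau_u$ in the same way, via an $O(1/\sqrt n)$ remainder.
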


\begin{proof}
Suppose that $A\subset\bV$ is finite and such that the sets 
$B_u$, $u\in A$, are pairwise disjoint with $\sum_{u\in A}\mu(B_u)=1$. 
We know from the proof of Theorem~\ref{thm:sts}\,(b) that 
$\tau_u= \inf\{n\in\bN:\, u\in X_n\}$ is finite with probability one, 
for all $u\in\bV$. Let $\rho:=\sup\{\tau_u:\, u\in A\}$ and fix some $k\in\bN$. 
Then it follows from the description of the DST algorithm with input 
$(\xi_n)_{n\in\bN}$ that,
conditionally on $\rho\le k$, the random vector $Y_n=(Y_{n,u})_{u\in A}$ with components
\begin{equation}\label{eq:defY}
        Y_{n,u} \,:=\, \bigl|\{k<m\le n:\, \xi_m\in B_u\}\bigr| 
                   \,=\, \bigl|\sigma(X_n,u)\bigr| - \bigl|\sigma(X_k,u)\bigr|, 
                      \quad u\in A,
\end{equation}
has  a multinomial distribution, with parameters $n-k$ (for the number of trials) and 
$(\mu(B_u))_{u\in A}$ (for the vector of success probabilities). By the central limit theorem
for these distributions,
\begin{equation}\label{eq:multCLT}
  \sqrt{n-k}\,\Bigl(\frac{1}{n-k} Y_{n,u}-\mu(B_u)\Bigr)_{u\in A}\, 
          \todistr\, Z=(Z_u)_{u\in A}\quad \text{as }n\to\infty,
\end{equation} 
where the random vector $Z$ is centered normal with covariances
\begin{equation*}
     \cov(Z_u,Z_v) \, = \, \begin{cases} \mu(B_u)(1-\mu(B_u)), &\text{if } u=v,  \\
                                         -\mu(B_u)\mu(B_v)\, &\text{if } u\not=v.
                           \end{cases}
\end{equation*} 
This implies 
\begin{equation}\label{eq:fidiCLT}
  \sqrt{n}\bigl(t(X_n,u)-\mu(B_u)\bigr)_{u\in A}\, 
          \todistr\, Z=(Z_u)_{u\in A}\quad \text{as }n\to\infty,
\end{equation} 
as the difference between the left hand sides in~\eqref{eq:multCLT} and~\eqref{eq:fidiCLT}
converges to zero with probability one as $n\to\infty$ because of~\eqref{eq:defY}. All this
is conditionally on $\rho\le k$ for some $k\in\bN$. However, as $k$ does not appear 
in~\eqref{eq:fidiCLT} and as $\rho<\infty$ with probability one, the last statement 
even holds unconditionally.

On this basis we now deduce the convergence of the finite-dimensional 
distributions together with the covariance function of the limit process. 

For $k\in\bN$ fixed we have asymptotic normality of the random vector $Z(k)$ 
associated with $A:=\bV_k$ from the above argument. This yields 
joint asymptotic normality for all $Z_u$ with $|u|\le k$, $u\in\bV$, as these 
variables are all linear functions of the vector $Z(k)$. 

Now let $u,v\in\bV$. If $u\prec v$ and $|v|=k$ then, due to the asymptotic 
negligibility of the difference, we may use the partition of $B_u$ into sets $B_w$ with
$|w|=k$ to obtain, with $A(u,v):=\{w\in\bV_k:\, w\not=v, u\prec w\}$ 
\begin{equation*}
\cov(Z_v,Z_u) \ =\  \var(Z_v) + \cov\Bigl(Z_v,\sum_{w\in A(u,v)} Z_w\Bigr)
              \ =\  \mu(B_v)(1-\mu(B_u)).
\end{equation*}    
The case $v\prec u$ follows by symmetry. Finally, if neither $u\prec v$ nor $v\prec u$
then $\{B_u,B_v\}$ can be augmented to a system $\cB_A$ to which the first step applies. 

As explained in \cite[p17]{Bill}, for $\bR^\bV$ this already implies the asserted convergence 
in distribution, together with the existence of the Gaussian process $Z$.
\end{proof}

The following is now an immediate consequence of the theorem and 
the mixture representation of the BST distribution, as
$M_{\text{\tiny BST}}$ has support $\bV_\infty$ with probability one.

\begin{corollary}
Let $X=(X_n)_{n\in\bN}$ be the sequence of random trees generated by the BST algorithm 
with independent random variables uniformly distributed on the unit interval. Let $Z(\mu)$
be the centered Gaussian process with covariance function given in~\eqref{eq:covZ}, and let
$M_{\text{\tiny\rm BST}}$ be as defined in~\eqref{eq:Mbst}.
Then
\begin{equation}\label{eq:BSTCLT}
    \sqrt{n}\bigl(t(X_n,u)-M_{\text{\tiny\rm BST}}(B_u)\bigr)_{u\in\bV} \, \todistr \, Z 
                   \quad\text{as }n\to\infty,
\end{equation} 
where the distribution of $Z$ is given by the distribution of $M_{\text{\tiny\rm BST}}$  and 
the conditional distribution $\cL(Z|M_{\text{\tiny\rm BST}}=\mu)=\cL(Z(\mu))$, $\mu\in\fM_\infty$. 
\end{corollary}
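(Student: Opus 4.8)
The plan is to deduce \eqref{eq:BSTCLT} from Theorem~\ref{thm:fidis} by conditioning on the driving measure $M_{\text{\tiny BST}}$ and then integrating over it. Two structural inputs are needed. First, the mixture representation of the $\BST$ sequence (Example~\ref{ex:sts3} together with Theorem~\ref{thm:charSTS}\,(a)) gives $\cL\bigl(X\mid M_{\text{\tiny BST}}=\mu\bigr)=\DST(\mu)$ for $\cL(M_{\text{\tiny BST}})$-almost every $\mu$. Second, by \eqref{eq:Mbst} and the independence and uniformity of the $\eta_u$ on $[0,1]$, each $M_{\text{\tiny BST}}(B_u)$ is a finite product of a.s.\ strictly positive factors, so with probability one $M_{\text{\tiny BST}}$ has full support $\bV_\infty$; hence Theorem~\ref{thm:fidis} applies to $\DST(\mu)$ for $\cL(M_{\text{\tiny BST}})$-a.e.\ $\mu$.

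The substantive step is the passage from conditional to unconditional convergence, which I would carry out on the level of characteristic functions of finite-dimensional projections, since convergence in distribution in $\bR^\bV$ with the product topology is equivalent to convergence of all finite-dimensional distributions (as invoked at the end of the proof of Theorem~\ref{thm:fidis}). Fix a finite $A\subset\bV$ and $\theta=(\theta_u)_{u\in A}\in\bR^A$, and set
\begin{equation*}
  g_n(\mu)\,:=\,\bE\Bigl[\exp\Bigl(i\sum_{u\in A}\theta_u\sqrt{n}\,\bigl(t(X_n,u)-\mu(B_u)\bigr)\Bigr)\,\Big|\,M_{\text{\tiny BST}}=\mu\Bigr].
\end{equation*}
By the $\DST$-kernel property established in the proof of Theorem~\ref{thm:charSTS} and the usual measurability of integrals against a probability kernel, $g_n$ is a measurable function of $\mu$ with $|g_n(\mu)|\le1$. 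By Theorem~\ref{thm:fidis}, for each $\mu$ of full support $g_n(\mu)\to g_\infty(\mu)$, the characteristic function of the centered Gaussian vector $(Z(\mu)_u)_{u\in A}$ with covariances read off from \eqref{eq:covZ}; and $g_\infty$ is measurable in $\mu$ since those covariances are polynomials in the numbers $\mu(B_w)$. The bounded convergence theorem then gives $\bE[g_n(M_{\text{\tiny BST}})]\to\bE[g_\infty(M_{\text{\tiny BST}})]$. By the tower property the left-hand side is the characteristic function of the $A$-marginal of $\sqrt{n}\bigl(t(X_n,u)-M_{\text{\tiny BST}}(B_u)\bigr)_{u\in A}$, while the right-hand side is, by construction, that of the $A$-marginal of the mixed Gaussian process $Z$ with $\cL\bigl(Z\mid M_{\text{\tiny BST}}=\mu\bigr)=\cL(Z(\mu))$.

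Since $A$ and $\theta$ are arbitrary, all finite-dimensional distributions of $\sqrt{n}\bigl(t(X_n,u)-M_{\text{\tiny BST}}(B_u)\bigr)_{u\in\bV}$ converge to those of $Z$, and, exactly as at the end of the proof of Theorem~\ref{thm:fidis}, this yields the asserted convergence in distribution in $\bR^\bV$ with the product topology, together with the existence of $Z$ as a random element there. I expect the only genuinely delicate point to be the interchange of the limit in $n$ with the integration over $\mu$; this is handled cleanly by the uniform bound $|g_n|\le1$ and bounded convergence, once the measurability of $g_n$ and $g_\infty$ in $\mu$ has been secured via the $\DST$-kernel.
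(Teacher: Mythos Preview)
Your argument is correct and follows precisely the route the paper indicates: the paper's own proof is the single sentence that the corollary is ``an immediate consequence of the theorem and the mixture representation of the BST distribution, as $M_{\text{\tiny BST}}$ has support $\bV_\infty$ with probability one.'' You have simply made explicit the conditioning-and-bounded-convergence step that the paper leaves to the reader, so there is nothing to add.
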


\acknowledgements
\label{sec:ack} 
The referees' comments have led to a significant improvement of the paper.

\nocite{*}
\bibliographystyle{abbrvnat}
% use the following instead if you encounter problems 
%\bibliographystyle{alpha}
%\bibliography{limBST}
\label{sec:biblio}

\end{document}